\theoremstyle{plain}
\newcounter{thmcount}
\newtheorem{theorem}[thmcount]{Theorem}
\newtheorem{lemma}[thmcount]{Lemma}
\newtheorem{corollary}[thmcount]{Corollary}
\theoremstyle{definition}
\newtheorem{remark}[thmcount]{Remark}
\newtheorem*{remark*}{Remark}
\newtheorem{definition}[thmcount]{Definition}
\DeclareSymbolFont{cyrletters}{OT2}{wncyr}{m}{n}
\DeclareMathSymbol{\Sha}{\mathalpha}{cyrletters}{"58}
\title{$p^{\infty}$--Selmer ranks of CM Abelian Varieties}
\subjclass[2020]{11G15, 14K22}
\author{Jamie Bell}
\address{University College London, Gower Street, London, WC1E 6BT, UK}
\email{james.bell.20@ucl.ac.uk}
\begin{document}

\maketitle

\begin{abstract}
    For an elliptic curve with complex multiplication over a number field, the $p^{\infty}$--Selmer rank is even for all $p$. \v{C}esnavi\v{c}ius proved this using the fact that $E$ admits a $p$-isogeny whenever $p$ splits in the complex multiplication field, and invoking known cases of the $p$-parity conjecture. We give a direct proof, and generalise the result to abelian varieties.
\end{abstract}

\section*{Introduction}
An elliptic curve over a number field with endomorphisms other than multiplication by an integer is said to have complex multiplication. These can make the curves easier to work with. It is well known, and important for this paper, that they always have even rank. Indeed, suppose we have an elliptic curve $E$ over $K$ with complex multiplication, and note that $E(K) \otimes_{\mathbb{Z}} \mathbb{Q}$ is a vector space over $\mathbb{Q}$ with dimension equal to the rank of $E$. Now the endomorphisms must in fact form a lattice in an imaginary quadratic field $L$. Then $E(K) \otimes_{\mathbb{Z}} \mathbb{Q}$ is an $L$--vector space, and so an even-dimensional $\mathbb{Q}$--vector space.

It is not hard to prove that these curves also have root number 1 (\cite{kestutis}, Prop. 6.3), so they satisfy the parity conjecture. We might hope that there is an analogous result for Selmer groups. This is in fact true, however it is not so easy to prove. In this paper we will present a new proof of this result, and generalise it to abelian varieties (Theorem \ref{maintheorem}). To see why the analogous proof fails, note that we are now looking at $\mathbb{Q}_p$--vector spaces. Suppose $p=5$ and $L=\mathbb{Q}(i)$. Then we can have an action of $L$ on a one-dimensional $\mathbb{Q}_p$--vector space, because $i \in \mathbb{Q}_5$. The method works when $p$ is inert in $L$, but not when it splits.

Suppose we have an elliptic curve $E$ over a number field $K$, which has complex multiplication. Looking at the Tate--Shafarevich group $\Sha$, we find its $p$-primary part is isomorphic to $(\textrm{finite group}) \times (\mathbb{Q}_p/\mathbb{Z}_p)^{\delta_p}$ for some integer $\delta_p$.
We will define the $p^{\infty}$--Selmer rank to be $\mathrm{rk}_p(E)=\mathrm{rk}(E)+\delta_p$.

A generalisation of elliptic curves is abelian varieties. The aim of this paper is to prove the following:

\begin{theorem}\label{maintheorem}
Suppose $A/K$ is an abelian variety with complex multiplication by $M$, and $p$ a prime. Then $\mathrm{rk}_p(A)$ is even.
\end{theorem}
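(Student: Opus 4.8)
The plan is to reduce the statement to a duality computation in local linear algebra. Throughout, write $M_p = M \otimes_{\Q} \mathbb{Q}_p = \prod_{v \mid p} M_v$, the product being over the primes $v$ of $M$ above $p$. Since $A$ has complex multiplication by $M$, the algebra $M$ is of CM type: it is (a product of) totally imaginary quadratic extensions of a totally real field $M^+$, and the Rosati involution attached to any polarisation of $A$ restricts on $M$ to the complex conjugation $c$ generating $\mathrm{Gal}(M/M^+)$. I will use these facts freely.

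First I would realise the $p^\infty$--Selmer rank as the dimension of an $M_p$--module. Let $V_p A = T_p A \otimes_{\mathbb{Z}_p} \mathbb{Q}_p$ be the rational $p$--adic Tate module and $V = H^1_f(K, V_pA)$ the associated Bloch--Kato Selmer space. Its dimension over $\mathbb{Q}_p$ is $\mathrm{rk}_p(A)$, because $H^1_f(K,V_pA) = \bigl(\varprojlim_n \mathrm{Sel}_{p^n}(A/K)\bigr)\otimes \mathbb{Q}_p$ and the compact Selmer module has $\mathbb{Z}_p$--rank equal to the corank of $\mathrm{Sel}_{p^\infty}(A/K)$, namely $\mathrm{rk}(A)+\delta_p$. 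The action of $M$ on $A$ makes $V$ a module over $M_p$, and I would decompose $V = \bigoplus_{v\mid p} V_v$ according to the primitive idempotents $e_v$ of $M_p$, so that $\mathrm{rk}_p(A) = \sum_{v\mid p} \dim_{\mathbb{Q}_p} V_v$.

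Next I would produce a pairing on $V$. The Weil pairing identifies $V_p A^\vee$ with the Tate twist of the $\mathbb{Q}_p$--dual of $V_pA$, and Poitou--Tate duality—using that the Bloch--Kato local conditions are exact annihilators of one another under local Tate duality—yields a perfect $\mathbb{Q}_p$--bilinear pairing $H^1_f(K,V_pA) \times H^1_f(K,V_pA^\vee) \to \mathbb{Q}_p$. A $K$--rational polarisation $\lambda\colon A \to A^\vee$ is an isogeny intertwining the action of $a \in M$ on $A$ with that of $\bar a = c(a)$ on $A^\vee$, this being exactly the statement that Rosati induces $c$ on $M$. Pulling the pairing back along the resulting isomorphism $V \xrightarrow{\sim} H^1_f(K, V_pA^\vee)$ then gives a perfect pairing $\langle\,,\,\rangle$ on $V$ with the conjugate--adjunction property $\langle a x, y\rangle = \langle x, \bar a\, y\rangle$ for all $a \in M_p$.

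Finally I would read off the parity. Complex conjugation permutes the factors of $M_p$, sending $e_v$ to $e_{\bar v}$; hence for $x \in V_v$ and $y \in V_w$ one computes $\langle x, y\rangle = \langle e_v x, y\rangle = \langle x, e_{\bar v} y\rangle$, which vanishes unless $w = \bar v$. Thus $\langle\,,\,\rangle$ restricts to a perfect pairing $V_v \times V_{\bar v} \to \mathbb{Q}_p$ for every $v$. When a prime of $M^+$ splits in $M$, the two primes above it are distinct, so $\dim_{\mathbb{Q}_p} V_v = \dim_{\mathbb{Q}_p} V_{\bar v}$ and the pair contributes the even number $2\dim_{\mathbb{Q}_p} V_v$; when a prime is inert or ramified one has $\bar v = v$, and $[M_v:\mathbb{Q}_p]$ is even because $M_v$ is a quadratic extension of the completion of $M^+$ below it, so $\dim_{\mathbb{Q}_p} V_v$ is already even. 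Summing over $v\mid p$ shows $\mathrm{rk}_p(A)=\dim_{\mathbb{Q}_p}V$ is even. I expect the main obstacle to be the middle step: confirming that the chosen Selmer space genuinely computes $\mathrm{rk}_p(A)$ and that the Poitou--Tate pairing, twisted through the polarisation, is both perfect and carries exactly the adjunction $\langle ax,y\rangle = \langle x,\bar a y\rangle$. Once that compatibility is secured, the parity is pure linear algebra.
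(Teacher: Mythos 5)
Your reduction of the parity statement to the local linear algebra at the end is sound, and it is in fact the same endgame as the paper's: decompose the Selmer space under $M\otimes_{\Q}\mathbb{Q}_p=\prod_{v\mid p}M_v$, handle inert/ramified primes of the totally real subfield by evenness of $[M_v:\mathbb{Q}_p]$, and pair up split primes via $\dim V_v=\dim V_{\bar v}$. Your first step is also fine: $\dim_{\mathbb{Q}_p}H^1_f(K,V_pA)=\mathrm{rk}_p(A)$ is a known (nontrivial) fact. The gap is in the middle step, and it is a genuine one: Poitou--Tate duality does \emph{not} yield a perfect pairing $H^1_f(K,V_pA)\times H^1_f(K,V_pA^\vee)\to\mathbb{Q}_p$. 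What the exactness of the local annihilation conditions actually gives is the opposite of what you want: if $x$ and $y$ are global classes satisfying the Bloch--Kato local conditions, then each local cup product $\mathrm{inv}_v(x_v\cup y_v)$ vanishes because the local conditions annihilate each other, so the natural global pairing obtained from cup product and reciprocity is \emph{identically zero} on $H^1_f\times H^1_f$. Poitou--Tate instead produces exact sequences (and, via Greenberg--Wiles, an Euler-characteristic formula for dimensions); it never produces a nondegenerate pairing between the two global Selmer spaces. Indeed, the known candidates for such a pairing are the $p$-adic height pairings, whose nondegeneracy is a well-known open conjecture; and the Cassels--Tate/Flach-type pairings have the divisible (i.e.\ the relevant) parts exactly in their kernels. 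So the adjunction identity $\langle ax,y\rangle=\langle x,\bar a y\rangle$ has nothing perfect to act on, and the conclusion $\dim V_v=\dim V_{\bar v}$ does not follow as written.

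Your strategy can be repaired, but by dimension counting rather than by a pairing: arrange $\mathcal{O}_M\subset\mathrm{End}_K(A)$ after an isogeny, note that the Weil pairing composed with the polarisation identifies the Cartier dual of $A[\mathfrak{p}^n]$ with $A[\bar{\mathfrak{p}}^n]$ (this is where Rosati $=$ complex conjugation enters), check that the local conditions for these two modules are exact annihilators, and apply the Greenberg--Wiles formula to get equality of the $\mathfrak{p}$- and $\bar{\mathfrak{p}}$-Selmer coranks; that replaces your perfect pairing by an equality of dimensions, which is all the final step needs. The paper avoids this local-conditions bookkeeping entirely by a different mechanism: it uses isogeny invariance of the terms in the Birch--Swinnerton-Dyer formula (Milne's Theorem I.7.3) for a self-isogeny to prove $\#\mathrm{ker}(\hat f_{\Sha_d[p^\infty]})=\#\mathrm{ker}(f_{\Sha_d[p^\infty]})$, converts this into the determinant identity $\mathrm{ord}_p\det(\phi_{\mathcal{Y}_p})=\mathrm{ord}_p\det(\phi^{\dag}_{\mathcal{Y}_p})$ on the dual of the divisible part of $\Sha[p^\infty]$, and then applies it to the idempotent-like element $\alpha$ that is $p$ on one factor $M_{\mathfrak{p}}$ and $1$ elsewhere to force $\dim V_{\mathfrak{p}}=\dim V_{\bar{\mathfrak{p}}}$ (the Mordell--Weil rank being even by the elementary $M$-vector-space argument). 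Either repair route is legitimate; the pairing route as you stated it is not.
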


Given that the rank is even, Theorem \ref{maintheorem} is equivalent to the statement that the divisible part of $\Sha$ has even $\mathbb{Z}_p$-corank. This is in fact expected to be 0, as $\Sha$ is conjectured to be finite.

Another reason to expect Theorem \ref{maintheorem} to hold is the $p$-parity conjecture, which states that for an abelian variety $A$ over a number field $K$, with root number $w(A/K)$,
\begin{equation}
    (-1)^{\mathrm{rk}_p(A/K)}=w(A/K).
\end{equation}

In the CM case, the root number is 1 (\cite{kestutis}, Proposition 6.3). In a different form $p$-parity was conjectured by Selmer in 1954 (\cite{selmer}). The conjecture is known in the case where $A$ is an elliptic curve over a number field admitting a $p$-isogeny thanks to T. and V. Dokchitser (\cite{DD11}) and \v{C}esnavi\v{c}ius (\cite{kestutis}). By calculating root numbers, this allowed \v{C}esnavi\v{c}ius to conclude that for elliptic curves with complex multiplication, $\mathrm{rk}_p(A)$ is even. However there is no equivalent $p$-parity result to use for abelian varieties, so we must use a different method. 

Throughout the paper we use `complex multiplication' or `CM' to mean complex multiplication defined over $K$. With CM defined over $\bar{\mathbb{Q}}$, the $p$-parity conjecture has been proved for elliptic curves over totally real $K$, but is open in general. For $p\neq2$ this is due to Nekovar (\cite{Nek2}, 5.10) and for $p=2$ Green and Maistret (\cite{GM}, 6.5).

From Theorem \ref{maintheorem} we can deduce the following:

\begin{corollary}
    Suppose $A$ and $p$ are as in Theorem \ref{maintheorem}. If $\Sha[p^{\infty}]$ is infinite, then it contains $(\mathbb{Q}_p/\mathbb{Z}_p)^2$. 
\end{corollary}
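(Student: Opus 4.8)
The plan is to reduce everything to a parity count on the corank $\delta_p$ of the divisible part of $\Sha[p^{\infty}]$. Recall from the introduction that $\Sha[p^{\infty}] \cong (\textrm{finite group}) \times (\mathbb{Q}_p/\mathbb{Z}_p)^{\delta_p}$, so $\Sha[p^{\infty}]$ is infinite precisely when $\delta_p \geq 1$, and its divisible part contains $(\mathbb{Q}_p/\mathbb{Z}_p)^2$ precisely when $\delta_p \geq 2$. Since $\mathrm{rk}_p(A) = \mathrm{rk}(A) + \delta_p$, the corollary will follow once I show that $\delta_p$ is even: combined with $\delta_p \geq 1$ this forces $\delta_p \geq 2$.

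First I would establish that the Mordell--Weil rank $\mathrm{rk}(A)$ is even, by the same argument sketched in the introduction for elliptic curves. The group $A(K) \otimes_{\mathbb{Z}} \mathbb{Q}$ carries an action of $\mathrm{End}(A) \otimes_{\mathbb{Z}} \mathbb{Q} \supseteq M$, and since $M$ is a field this makes $A(K) \otimes_{\mathbb{Z}} \mathbb{Q}$ an $M$--vector space. Hence $\mathrm{rk}(A) = \dim_{\mathbb{Q}}(A(K) \otimes_{\mathbb{Z}} \mathbb{Q})$ is a multiple of $[M:\mathbb{Q}]$. As $M$ is a CM field, $[M:\mathbb{Q}]$ is even, so $\mathrm{rk}(A)$ is even.

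Next I would invoke Theorem \ref{maintheorem}, which gives that $\mathrm{rk}_p(A) = \mathrm{rk}(A) + \delta_p$ is even. Subtracting the even quantity $\mathrm{rk}(A)$ shows that $\delta_p$ is even. If $\Sha[p^{\infty}]$ is infinite then $\delta_p \geq 1$, and an even integer that is at least $1$ is at least $2$; therefore $(\mathbb{Q}_p/\mathbb{Z}_p)^2$ embeds into the divisible part $(\mathbb{Q}_p/\mathbb{Z}_p)^{\delta_p} \subseteq \Sha[p^{\infty}]$, as claimed.

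There is essentially no genuine obstacle here beyond Theorem \ref{maintheorem} itself, which does all the real work; the corollary is a formal parity consequence. The only point needing a little care is that \say{CM by $M$} supplies a field action of even degree on the Mordell--Weil group --- if only an order in $M$ acts on $A$, the same conclusion holds after tensoring with $\mathbb{Q}$, since that turns the order into the field $M$.
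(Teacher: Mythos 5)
Your proof is correct and is precisely the argument the paper intends: the paper treats the corollary as immediate from Theorem \ref{maintheorem}, having already noted in the introduction that the Mordell--Weil rank is even (via the $M$--vector space structure on $A(K)\otimes_{\mathbb{Z}}\mathbb{Q}$ and the evenness of $[M:\mathbb{Q}]$), so that evenness of $\mathrm{rk}_p(A)=\mathrm{rk}(A)+\delta_p$ forces $\delta_p$ to be even, and $\delta_p\geq 1$ then gives $\delta_p\geq 2$. Your extra remark about orders versus the full field $M$ is harmless and consistent with the paper's Definition \ref{CMdef}, which already works with $\mathrm{End}_K(A)\otimes\mathbb{Q}$.
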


\subsection*{Notation}
Throughout, we will assume $A$ and $B$ are abelian varieties over a number field $K$, with a polarisation $\lambda: A \rightarrow \hat{A}$ defined over $K$.

For an isogeny $f$ and a field $L$, denote by $f_{A(L)}$ the map on $L$-points induced by $f$, and similarly let $f_{\Sha}$ by the map induced on the Tate--Shafarevich group.

We will denote the dual of an abelian variety $A$ by $\hat{A}$, and of an isogeny $f: A \rightarrow B$ by $\hat{f}: \hat{B} \rightarrow \hat{A}$.

For a prime $p$, $\Sha[p^{\infty}]$ is the $p$-primary part of $\Sha$. $\delta_p$ will denote the multiplicity of $\mathbb{Q}_p/\mathbb{Z}_p$ in $\Sha[p^{\infty}]$. 
Specifically, $\Sha[p^{\infty}] \cong (\textrm{finite group}) \times (\mathbb{Q}_p/\mathbb{Z}_p)^{\delta_p}$ for some integer $\delta_p$. $\Sha_d$ will be the divisible part of $\Sha$ and $\Sha_{nd}$ the quotient of $\Sha$ by $\Sha_d$.

Write $Y_p(A/K)$ for $\mathrm{Hom}(\Sha_d[p^{\infty}],\mathbb{Q}_p/\mathbb{Z}_p)$, the Pontryagin dual of $\Sha_d[p^{\infty}]$. Let $\mathcal{Y}_p(A/K) = Y_p(A/K) \otimes_{\mathbb{Z}_p} \mathbb{Q}_p$. Note this is a $\mathbb{Q}_p$-vector space of dimension $\delta_p$, and an $\mathrm{End}_K(A) \otimes_{\mathbb{Z}} \mathbb{Q}_p$-module.

\begin{definition}[Rosati involution]
    For an abelian variety $A$ with polarisation $\lambda$, the Rosati involution is the involution on $\mathrm{End}_K(A) \otimes_{\mathbb{Z}} \mathbb{Q}$ sending $f$ to $f^{\dag} := \lambda^{-1} \circ \hat{f} \circ \lambda$. We extend this by continuity to $\mathrm{End}_K(A) \otimes_{\mathbb{Z}} \mathbb{Q}_p$.
\end{definition}

\begin{definition}[Complex multiplication]\label{CMdef}
    We say $A$ has complex multiplication if 
    \begin{itemize}
        \item $\mathrm{End}_K(A)\otimes\mathbb{Q} \supset M$, where $M$ is a totally complex field containing a totally real field $L$, $[M:L]=2$ and $[L:\mathbb{Q}]=\mathrm{dim}(A)$
        \item The Rosati involution corresponds to complex conjugation on $M$.
    \end{itemize}
\end{definition}
Note that this is complex multiplication over $K$, not over $\bar{K}$ (which is sometimes called potential complex multiplication).

\subsection*{Acknowledgements} I would like to thank my supervisor Vladimir Dokchitser for suggesting this problem, and for his advice and guidance. I would also like to thank the reviewers for their comments, and their help in strengthening the result.

This work was supported by the Engineering and Physical Sciences Research Council [EP/L015234/1], the EPSRC Centre for Doctoral Training in Geometry and Number Theory (The London School of Geometry and Number Theory) at University College London.

\section{Self-isogenies}
Suppose $A$ and $B$ are abelian varieties over a number field $K$, and $f:A \rightarrow B$ an isogeny between them.

Recall the following theorem.
\begin{theorem}[\cite{Milne}, Proof of I.7.3, I.7.3.1]\label{l3.1}
There is some finite set $S$ of places of $K$ such that
\begin{equation}
    \prod_{v \in S} \frac{\#\mathrm{ker}(f_{A(K_v)})}{\#\mathrm{coker}(f_{A(K_v)})} = \frac{\#\mathrm{ker}(f_{A(K)})}{\#\mathrm{coker}(f_{A(K)})} \cdot \frac{\#\mathrm{coker}(\hat{f}_{\hat{B}(K)})}{\#\mathrm{ker}(\hat{f}_{\hat{B}(K)})} \cdot \frac{\#\mathrm{ker}(\hat{f}_{\Sha})}{\#\mathrm{ker}(f_{\Sha})}.
\end{equation}
\end{theorem}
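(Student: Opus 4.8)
The plan is to deduce the formula from the two descent exact sequences attached to $f$ and $\hat f$ together with global duality for the finite group scheme $A[f]$. First I would invoke the Kummer sequence $0 \to A[f] \to A \xrightarrow{f} B \to 0$ and take $K$-cohomology to obtain
\begin{equation}
0 \to \mathrm{coker}(f_{A(K)}) \to \mathrm{Sel}^{(f)}(A/K) \to \ker(f_{\Sha}) \to 0,
\end{equation}
where $\mathrm{coker}(f_{A(K)}) = B(K)/fA(K)$ injects into $H^1(K,A[f])$ and $\mathrm{Sel}^{(f)}(A/K)$ is the subgroup cut out by the local conditions $L_v = \mathrm{im}\bigl(\mathrm{coker}(f_{A(K_v)}) \hookrightarrow H^1(K_v,A[f])\bigr)$. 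The same sequence for $\hat f$ reads $0 \to \mathrm{coker}(\hat f_{\hat B(K)}) \to \mathrm{Sel}^{(\hat f)}(\hat B/K) \to \ker(\hat f_{\Sha}) \to 0$. Counting orders gives $\#\mathrm{Sel}^{(f)}(A/K) = \#\mathrm{coker}(f_{A(K)})\cdot\#\ker(f_{\Sha})$ and likewise for $\hat f$.

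The second ingredient is that the Weil pairing identifies $\hat B[\hat f]$ with the Cartier dual of $A[f]$, so local Tate duality furnishes a perfect pairing $H^1(K_v,A[f]) \times H^1(K_v,\hat B[\hat f]) \to \Q/\mathbb{Z}$ under which the Selmer conditions $L_v$ and $L_v'$ for $f$ and $\hat f$ are exact orthogonal complements. I would then apply the Greenberg--Wiles Euler-characteristic formula for Selmer groups of a pair of dual local conditions, which gives
\begin{equation}
\frac{\#\mathrm{Sel}^{(f)}(A/K)}{\#\mathrm{Sel}^{(\hat f)}(\hat B/K)} = \frac{\#H^0(K,A[f])}{\#H^0(K,\hat B[\hat f])}\prod_{v}\frac{\#L_v}{\#H^0(K_v,A[f])}.
\end{equation}
The factor here is $1$ for all $v$ outside a finite set $S$ (the archimedean places, the places of bad reduction, and those dividing $\deg f$), which is the set $S$ of the statement; this finiteness is what makes the product well defined.

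To finish I would translate each term into the quantities in the statement: injectivity of $\mathrm{coker}(f_{A(K_v)}) \hookrightarrow H^1(K_v,A[f])$ gives $\#L_v = \#\mathrm{coker}(f_{A(K_v)})$, while $\#H^0(K_v,A[f]) = \#\ker(f_{A(K_v)})$, $\#H^0(K,A[f]) = \#\ker(f_{A(K)})$ and $\#H^0(K,\hat B[\hat f]) = \#\ker(\hat f_{\hat B(K)})$. Substituting the order computations from the descent sequences for $\#\mathrm{Sel}^{(f)}(A/K)$ and $\#\mathrm{Sel}^{(\hat f)}(\hat B/K)$ and solving for $\prod_{v\in S}\#\ker(f_{A(K_v)})/\#\mathrm{coker}(f_{A(K_v)})$ rearranges the displayed identity into
\begin{equation}
\prod_{v \in S} \frac{\#\ker(f_{A(K_v)})}{\#\mathrm{coker}(f_{A(K_v)})} = \frac{\#\ker(f_{A(K)})}{\#\mathrm{coker}(f_{A(K)})}\cdot\frac{\#\mathrm{coker}(\hat f_{\hat B(K)})}{\#\ker(\hat f_{\hat B(K)})}\cdot\frac{\#\ker(\hat f_{\Sha})}{\#\ker(f_{\Sha})},
\end{equation}
which is the claim.

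I expect the main obstacle to be the duality input rather than the bookkeeping: one must check carefully that the local images $L_v$ and $L_v'$ are genuinely orthogonal complements under local Tate duality, which is where duality for abelian varieties and the compatibility of the Weil pairing with $f$ and $\hat f$ enter, and that Greenberg--Wiles applies with exactly these conditions. The other point requiring care is verifying that the local factor is trivial outside $S$, so that the global product reduces to a finite expression; this rests on the $v$-adic Euler characteristic of $A[f]$ being trivial at places of good reduction away from $\deg f$.
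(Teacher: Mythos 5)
Since the paper does not prove this statement itself --- it is quoted directly from \cite{Milne} (proof of I.7.3) --- the relevant comparison is with Milne's argument, and yours is essentially that same proof: the $f$- and $\hat f$-descent sequences, the Weil-pairing identification of $\hat B[\hat f]$ with the Cartier dual of $A[f]$, the orthogonal-complement property of the local Kummer images under local Tate duality, and a Poitou--Tate/global Euler-characteristic count (which is precisely what your Greenberg--Wiles invocation packages) are exactly the ingredients used there. Your bookkeeping also rearranges correctly to the stated identity, including at archimedean places, provided the Euler-characteristic formula is taken with the usual $H^0(K_v,\cdot)$ in the local factors, so that a complex place contributes $1/\deg f$ in agreement with $\#\mathrm{coker}(f_{A(K_v)})/\#\ker(f_{A(K_v)})$ there.
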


\begin{corollary}\label{cor6}
    Suppose that $A=B$, i.e. $f$ is a self-isogeny. Then 
    \begin{equation}
        \#\mathrm{ker}(\hat{f}_{\Sha})=\#\mathrm{ker}(f_{\Sha}).
    \end{equation}
\end{corollary}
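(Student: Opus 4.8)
The plan is to apply Theorem \ref{l3.1} with $A=B$ and to show that the product of local factors on the left equals the product of the two global Mordell--Weil factors on the right; this forces the remaining factor $\#\mathrm{ker}(\hat f_{\Sha})/\#\mathrm{ker}(f_{\Sha})$ to equal $1$, which is exactly the claim. Concretely, I would prove two separate statements: that the left-hand product is $1$, and that the product of the two factors coming from $A(K)$ and $\hat A(K)$ is $1$.

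For the local side, I would compute each factor explicitly. Over a completion $K_v$ the group $A(K_v)$ is, up to a finite subgroup and a finite quotient, its formal group, which is isomorphic to $\mathcal{O}_{K_v}^{\dim A}$; on such a piece an endomorphism acts through its differential. Since $\#\mathrm{ker}/\#\mathrm{coker}$ is multiplicative in short exact sequences of finite groups and equals $1$ on any finite group, all component-group and torsion contributions cancel, leaving
\begin{equation}
\frac{\#\mathrm{ker}(f_{A(K_v)})}{\#\mathrm{coker}(f_{A(K_v)})} = \|\det(\mathrm{d}f)\|_v,
\end{equation}
where $\mathrm{d}f$ is the action of $f$ on $\mathrm{Lie}(A)$ and $\|\cdot\|_v$ is the normalised absolute value; the archimedean places are handled identically with $A(K_v)$ a real torus. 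The point specific to a \emph{self}-isogeny is that any N\'eron-model (Tamagawa) correction attaches to $A$ both as source and as target and hence cancels. Now $\mathfrak{d} := \det(\mathrm{d}f)$ is a single element of $K^\times$, so after enlarging $S$ to contain every place with $\|\mathfrak{d}\|_v \neq 1$ (harmless, since this only inserts factors of $1$), the product formula gives $\prod_{v \in S}\|\mathfrak{d}\|_v = \prod_{\text{all } v}\|\mathfrak{d}\|_v = 1$.

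For the global side, recall that for an endomorphism $\phi$ of a finitely generated abelian group with finite kernel and cokernel one has $\#\mathrm{ker}(\phi)/\#\mathrm{coker}(\phi) = |\det(\phi_{\mathbb{Q}})|^{-1}$, the determinant being computed on the associated $\mathbb{Q}$-vector space. Thus the first global factor is $|\det(f \mid A(K)\otimes\mathbb{Q})|^{-1}$ and the second is $|\det(\hat f \mid \hat A(K)\otimes\mathbb{Q})|$. The N\'eron--Tate height pairing $A(K)\times\hat A(K)\to\mathbb{R}$ is non-degenerate and satisfies $\langle f(P),Q\rangle = \langle P,\hat f(Q)\rangle$, so $f$ and $\hat f$ are adjoint with respect to a perfect pairing and therefore have equal determinants. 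Hence the two global factors are reciprocal and their product is $1$.

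Combining the two computations, the identity of Theorem \ref{l3.1} becomes $1 = 1 \cdot \#\mathrm{ker}(\hat f_{\Sha})/\#\mathrm{ker}(f_{\Sha})$, giving the corollary. I expect the main obstacle to lie in the local factor computation in full generality --- in particular, justifying the cancellation of the N\'eron-model correction terms at places of bad reduction and treating the archimedean places cleanly --- whereas the global input is a formal consequence of the adjointness of $f$ and $\hat f$ under the height pairing.
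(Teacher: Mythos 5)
Your proposal follows the same skeleton as the paper's proof: specialise Theorem \ref{l3.1} to $A=B$, show that the product of local factors and the product of the two Mordell--Weil factors each equal $1$, and conclude that $\#\mathrm{ker}(\hat f_{\Sha})/\#\mathrm{ker}(f_{\Sha})=1$. The difference lies in how those two facts are obtained. The paper gets both by citation: Milne's proof of I.7.3 identifies the left-hand side with the ratio of the BSD volume terms of $A$ and $B$, and the two global factors with $\mathrm{Reg}(A/K)/\mathrm{Reg}(B/K)$ times a ratio of torsion orders; all of these depend only on $A$ and $B$, not on $f$, so they are trivially $1$ when $A=B$. You instead re-derive them: the global half from $\#\mathrm{ker}/\#\mathrm{coker}=|\det(\cdot\otimes\mathbb{Q})|^{-1}$ on finitely generated groups together with adjointness of $f$ and $\hat f$ under the non-degenerate N\'eron--Tate pairing, and the local half from the Haar-measure computation $\#\mathrm{ker}(f_{A(K_v)})/\#\mathrm{coker}(f_{A(K_v)})=\|\det(\mathrm{d}f)\|_v$ plus the product formula. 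Your global half is correct and complete as written; it is in effect a proof of the regulator identity that the paper merely quotes.

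The local half, however, contains a genuinely circular step. Theorem \ref{l3.1} as stated asserts only that \emph{some} finite $S$ works. You enlarge $S$ to contain every place with $\|\mathfrak{d}\|_v\neq 1$ and call this harmless ``since it only inserts factors of $1$'' --- but by your own local formula, the factor inserted at such a place is exactly $\|\mathfrak{d}\|_v\neq 1$, so either those places already lay in $S$ (which the statement gives you no way to know) or the enlargement destroys the identity. What your argument actually yields is only that the left-hand side equals the partial product $\prod_{v\in S}\|\mathfrak{d}\|_v$, and the product formula says nothing about partial products. The gap is real but standard to fill: one must return to Milne's proof, where the identity is established for any finite $S$ containing the archimedean places, the places of bad reduction, and the places dividing $\deg f$; and one must check that for $v$ outside such a set the local factor is $1$, equivalently $\det(\mathrm{d}f)\in\mathcal{O}_v^{\times}$. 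This last point follows because $f$ and its conjugate isogeny $g$ (with $g\circ f=[\deg f]$) extend to the abelian scheme over $\mathcal{O}_v$, so $\det(\mathrm{d}f)$ and $\det(\mathrm{d}g)$ lie in $\mathcal{O}_v$ while their product $(\deg f)^{\dim A}$ is a $v$-unit. With that supplement your product-formula argument closes; the paper's route avoids the issue altogether because the volume-term interpretation it cites is independent of $f$ from the start.
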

\begin{proof}
    In the formula in theorem \ref{l3.1}, the left hand side is equal to the ratio of the volume forms of $A$ and $B$ which appear in the BSD coefficient (see \cite{Milne} Section I.7 for a full definition; in the notation of this chapter the volume term is $\frac{\prod_{\nu \in S}\mu_{\nu}(A,\omega)}{|\mu|^d}$). These depend only on $A$ and not $f$ so when $A=B$, this is 1.

Similarly, the next two terms equal the ratio of the regulators of $A$ and $B$ and the orders of their torsion subgroups. Specifically,
\begin{equation*}
    \frac{\#\mathrm{ker}(f_{A(K)})}{\#\mathrm{coker}(f_{A(K)})} \cdot \frac{\#\mathrm{coker}(\hat{f}_{\hat{B}(K)})}{\#\mathrm{ker}(\hat{f}_{\hat{B}(K)})}  = \frac{\mathrm{Reg}(A/K)\#B(K)_{tors}\#\hat{B}(K)_{tors}}{\mathrm{Reg}(B/K)\#A(K)_{tors}\#\hat{A}(K)_{tors}},
\end{equation*}
therefore when $A=B$ this is also 1.
\end{proof}
The following variant of this result will be useful.
\begin{lemma}
Suppose $f$ is as above, and $p$ any prime. Then
\begin{equation}
    \#\mathrm{ker}(\hat{f}_{\Sha[p^{\infty}]})=\#\mathrm{ker}(f_{\Sha[p^{\infty}]}),
\end{equation}
\end{lemma}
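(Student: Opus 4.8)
The plan is to deduce the per-prime statement from the global identity of Corollary \ref{cor6} by combining the primary decomposition of $\Sha$ with unique factorisation of integers. The point is that Corollary \ref{cor6} already gives $\#\mathrm{ker}(f_{\Sha}) = \#\mathrm{ker}(\hat{f}_{\Sha})$; since the orders of the $\ell$-primary pieces are prime powers for distinct primes, the single global equality forces an equality separately at each prime.

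First I would use that $\Sha$ is a torsion abelian group, so it is the direct sum of its $\ell$-primary components, $\Sha = \bigoplus_{\ell} \Sha[\ell^{\infty}]$. Because any homomorphism sends an element of $\ell$-power order to an element of $\ell$-power order, $f_{\Sha}$ maps $\Sha[\ell^{\infty}]$ into itself, and likewise for $\hat{f}_{\Sha}$. Hence the kernels split along the decomposition,
\begin{equation*}
    \mathrm{ker}(f_{\Sha}) = \bigoplus_{\ell} \mathrm{ker}(f_{\Sha[\ell^{\infty}]}), \qquad \mathrm{ker}(\hat{f}_{\Sha}) = \bigoplus_{\ell} \mathrm{ker}(\hat{f}_{\Sha[\ell^{\infty}]}).
\end{equation*}
To see these are finite (so that counting is legitimate), let $n = \deg f$ and let $g$ be the isogeny with $g \circ f = [n]$; then $g_{\Sha} \circ f_{\Sha} = [n]_{\Sha}$, so $\mathrm{ker}(f_{\Sha}) \subseteq \Sha[n]$, which is finite. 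In particular each $\mathrm{ker}(f_{\Sha[\ell^{\infty}]})$ is a finite $\ell$-group, its order is a power of $\ell$, and all but finitely many of these factors are trivial.

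Taking orders therefore gives $\#\mathrm{ker}(f_{\Sha}) = \prod_{\ell} \#\mathrm{ker}(f_{\Sha[\ell^{\infty}]})$, and the same for $\hat{f}$. I would then feed in Corollary \ref{cor6}, which yields
\begin{equation*}
    \prod_{\ell} \#\mathrm{ker}(f_{\Sha[\ell^{\infty}]}) = \#\mathrm{ker}(f_{\Sha}) = \#\mathrm{ker}(\hat{f}_{\Sha}) = \prod_{\ell} \#\mathrm{ker}(\hat{f}_{\Sha[\ell^{\infty}]}).
\end{equation*}
Both sides are now prime-power factorisations of the same integer, the $\ell$-part of the left being $\#\mathrm{ker}(f_{\Sha[\ell^{\infty}]})$ and of the right $\#\mathrm{ker}(\hat{f}_{\Sha[\ell^{\infty}]})$. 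By uniqueness of prime factorisation these $\ell$-parts coincide for every $\ell$, and specialising to $\ell = p$ is precisely the claimed identity.

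I do not expect a genuine obstacle here, as the lemma is a short refinement of the corollary rather than a new computation. The only delicate points are the finiteness of the kernels, which is handled by the conjugate isogeny bound $\mathrm{ker}(f_{\Sha}) \subseteq \Sha[n]$, and the observation that the relevant orders are honest prime powers; it is this last fact that upgrades the single global identity of Corollary \ref{cor6} into one identity per prime.
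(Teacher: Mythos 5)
Your proposal is correct and takes essentially the same route as the paper: both decompose the kernels into their $\ell$-primary components and extract the $p$-part of the global equality from Corollary \ref{cor6} via unique factorisation. The only addition is that you make explicit the finiteness of the kernels (via the conjugate isogeny bound $\mathrm{ker}(f_{\Sha}) \subseteq \Sha[\deg f]$), which the paper leaves implicit.
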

\begin{proof}
For any prime $p$, the $p$-adic valuations of the kernels in Lemma \ref{cor6} must be equal. As both kernels decompose as a product over primes $l$ of their $l$-primary subgroups, the $p$-part of each side comes from $\Sha[p^{\infty}]$, so we can replace $\Sha$ by $\Sha[p^{\infty}]$ and still have equality.
\end{proof}

\begin{lemma}\label{lemma17}
Suppose $f$ is as before. Then we can split the kernels into divisible and non-divisible parts. Specifically,
\begin{equation}
    \#\mathrm{ker}(\hat{f}_{\Sha_d})\#\mathrm{ker}(\hat{f}_{\Sha_{nd}})=\#\mathrm{ker}(f_{\Sha_d})\#\mathrm{ker}(f_{\Sha_{nd}}).
\end{equation}
\end{lemma}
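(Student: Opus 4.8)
The plan is to prove that the number-theoretic identity
\[
\#\mathrm{ker}(\hat f_{\Sha[p^\infty]}) = \#\mathrm{ker}(f_{\Sha[p^\infty]})
\]
which we have just established, decomposes multiplicatively according to the canonical splitting of the $p$-primary part into its divisible and non-divisible pieces. Recall that $\Sha[p^\infty] \cong (\text{finite group}) \times (\mathbb{Q}_p/\mathbb{Z}_p)^{\delta_p}$, so that $\Sha_{d}[p^\infty] \cong (\mathbb{Q}_p/\mathbb{Z}_p)^{\delta_p}$ is the maximal divisible subgroup and $\Sha_{nd}[p^\infty]$ is the finite quotient. The key structural fact I would invoke is that any endomorphism $g$ of $\Sha[p^\infty]$ (in particular $f_{\Sha[p^\infty]}$ and $\hat f_{\Sha[p^\infty]}$) must preserve the maximal divisible subgroup, since the image of a divisible group under a homomorphism is again divisible, and $\Sha_d[p^\infty]$ is by definition the \emph{maximal} divisible subgroup. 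Hence $f_{\Sha[p^\infty]}$ restricts to an endomorphism $f_{\Sha_d}$ of $\Sha_d[p^\infty]$ and descends to an endomorphism $f_{\Sha_{nd}}$ of the quotient $\Sha_{nd}[p^\infty]$, giving a short exact sequence of maps.

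First I would set up the commutative diagram with exact rows
\[
\begin{CD}
0 @>>> \Sha_d[p^\infty] @>>> \Sha[p^\infty] @>>> \Sha_{nd}[p^\infty] @>>> 0 \\
@. @VV{f_{\Sha_d}}V @VV{f_{\Sha[p^\infty]}}V @VV{f_{\Sha_{nd}}}V @. \\
0 @>>> \Sha_d[p^\infty] @>>> \Sha[p^\infty] @>>> \Sha_{nd}[p^\infty] @>>> 0
\end{CD}
\]
and apply the snake lemma. This yields an exact sequence relating the kernels and cokernels of the three vertical maps. The point I must extract is a multiplicativity of \emph{kernel orders}, not merely the alternating-product identity that the snake lemma gives for free. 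For this I need to control the cokernels as well: the connecting map in the snake sequence links $\mathrm{coker}(f_{\Sha_d})$ to $\mathrm{ker}(f_{\Sha_{nd}})$, so the orders will not split cleanly unless I account for it. The cleanest route is to observe that $f_{\Sha_d}$ is an endomorphism of the divisible group $(\mathbb{Q}_p/\mathbb{Z}_p)^{\delta_p}$, which is automatically surjective whenever it has finite kernel (equivalently, its Pontryagin dual, an endomorphism of $\mathbb{Z}_p^{\delta_p}$, is injective iff it has finite cokernel), so $\#\mathrm{coker}(f_{\Sha_d}) = \#\mathrm{ker}(f_{\Sha_d})$ when these are finite; this collapses the connecting map's contribution and forces $\#\mathrm{ker}(f_{\Sha[p^\infty]}) = \#\mathrm{ker}(f_{\Sha_d}) \cdot \#\mathrm{ker}(f_{\Sha_{nd}})$.

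Having the factorisation $\#\mathrm{ker}(f_{\Sha[p^\infty]}) = \#\mathrm{ker}(f_{\Sha_d})\,\#\mathrm{ker}(f_{\Sha_{nd}})$ and the identical factorisation for $\hat f$, the desired equation follows immediately by substituting both into the previous lemma. I expect the main obstacle to be the cokernel bookkeeping in the snake lemma: one must verify that the maps genuinely respect the divisible/non-divisible splitting functorially (so that $f_{\Sha_d}$ and $f_{\Sha_{nd}}$ are well defined), and then argue carefully that the divisible-part kernel and cokernel have equal order so that the connecting homomorphism does not spoil the multiplicativity. A subtlety worth flagging is the degenerate case where a kernel is infinite; but since $f$ is an isogeny and $\hat f_{\Sha}$, $f_{\Sha}$ have finite kernels by the preceding lemma, all the groups in play here are finite, so every order is well defined and the argument goes through.
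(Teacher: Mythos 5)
Your proof is correct in substance and follows the same skeleton as the paper's: apply the snake lemma to $0 \to \Sha_d \to \Sha \to \Sha_{nd} \to 0$ (legitimate because homomorphisms preserve maximal divisible subgroups), show the cokernel on the divisible part vanishes so the six-term sequence truncates to a short exact sequence of kernels, and feed the resulting factorisations for $f$ and $\hat f$ into the previously established equality of kernel orders. The one real difference is how the cokernel is killed. The paper argues directly on all of $\Sha_d$: the conjugate isogeny $g$ satisfies $f \circ g = [\mathrm{deg}(f)]$, and multiplication by a nonzero integer is surjective on any divisible group, so $f_{\Sha_d}$ is surjective. You instead invoke the structure $\Sha_d[p^{\infty}] \cong (\mathbb{Q}_p/\mathbb{Z}_p)^{\delta_p}$ and Pontryagin duality: finite kernel dualises to finite cokernel of an endomorphism of $\mathbb{Z}_p^{\delta_p}$, hence injectivity, hence surjectivity of the original map. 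This is valid, but it ties you to one prime at a time, whereas the lemma as stated concerns the full $\Sha_d$ and $\Sha_{nd}$; to close this discrepancy you should either multiply your $p$-primary identities over all primes (each kernel is the product of its $l$-primary parts, all but finitely many trivial since the kernels are finite), or remark that only the $p$-primary version is needed downstream in Lemma \ref{lemma9}. The paper's conjugate-isogeny argument avoids both the structure theory and this bookkeeping.

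One clause needs correcting: \say{so $\#\mathrm{coker}(f_{\Sha_d}) = \#\mathrm{ker}(f_{\Sha_d})$ when these are finite} is not what surjectivity gives, and it is false in general --- multiplication by $p$ on $\mathbb{Q}_p/\mathbb{Z}_p$ has kernel of order $p$ and trivial cokernel. What your duality argument actually proves, and what is needed for the connecting homomorphism to vanish, is $\#\mathrm{coker}(f_{\Sha_d}) = 1$. The weaker statement $\#\mathrm{coker} = \#\mathrm{ker}$ on the divisible part would only feed into the alternating-product identity from the snake lemma and would not yield the factorisation $\#\mathrm{ker}(f_{\Sha[p^{\infty}]}) = \#\mathrm{ker}(f_{\Sha_d[p^{\infty}]})\,\#\mathrm{ker}(f_{\Sha_{nd}[p^{\infty}]})$. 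Since you state the surjectivity correctly in the same sentence, this is a slip of wording rather than a missing idea, but as written the inference points at the wrong fact.
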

\begin{proof}
We will show $\#\mathrm{ker}(f_{\Sha})=\#\mathrm{ker}(f_{\Sha_{d}})\#\mathrm{ker}(f_{\Sha_{nd}})$ and similarly for $\hat{f}$. This holds by an application of the snake lemma to the exact sequence
\begin{equation}
    0 \rightarrow \Sha_d \rightarrow \Sha \rightarrow \Sha_{nd} \rightarrow 0
\end{equation}
with the isogeny $f$, which is valid because $f$ maps $\Sha_d$ to $\Sha_d$. We can also see that $\#\mathrm{coker}(f_{\Sha_d})=1$, because $f$ has a conjugate isogeny $g: A \rightarrow A$. This has the property that $f \circ g = [\mathrm{deg}(f)]$, and multiplication by an integer is surjective on $\Sha_d$. The result follows.
\end{proof}

\begin{lemma}\label{lemma9}
Let $A/K$ be an abelian variety, $p$ a prime, and $f:A \rightarrow A$ an isogeny defined over $K$. Then
\begin{equation}
    \#\mathrm{ker}(\hat{f}_{\Sha_d[p^{\infty}]})=\#\mathrm{ker}(f_{\Sha_d[p^{\infty}]}).
\end{equation}
\end{lemma}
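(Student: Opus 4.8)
The plan is to reduce the claim on the divisible part to the corresponding claim on the non-divisible part, where the Cassels--Tate pairing can be brought to bear, and then to cancel. Lemma \ref{lemma17} gives the product identity $\#\mathrm{ker}(\hat{f}_{\Sha_d})\#\mathrm{ker}(\hat{f}_{\Sha_{nd}})=\#\mathrm{ker}(f_{\Sha_d})\#\mathrm{ker}(f_{\Sha_{nd}})$, and since every kernel decomposes as the product of its $l$-primary subgroups, passing to $p$-parts (exactly as in the $p$-primary variant of Corollary \ref{cor6} proved above) yields
\begin{equation*}
    \#\mathrm{ker}(\hat{f}_{\Sha_d[p^{\infty}]})\,\#\mathrm{ker}(\hat{f}_{\Sha_{nd}[p^{\infty}]})=\#\mathrm{ker}(f_{\Sha_d[p^{\infty}]})\,\#\mathrm{ker}(f_{\Sha_{nd}[p^{\infty}]}).
\end{equation*}
Hence it suffices to establish the non-divisible equality $\#\mathrm{ker}(\hat{f}_{\Sha_{nd}[p^{\infty}]})=\#\mathrm{ker}(f_{\Sha_{nd}[p^{\infty}]})$, after which the desired statement follows by cancellation.

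The second step is to prove this non-divisible equality, and here the tool is the Cassels--Tate pairing $\Sha(A) \times \Sha(\hat{A}) \to \mathbb{Q}/\mathbb{Z}$. Its left and right kernels are precisely the divisible subgroups, so it descends to a \emph{perfect} pairing on the non-divisible quotients $\Sha_{nd}(A) \times \Sha_{nd}(\hat{A})$; restricting to $p$-primary parts (which pair among themselves) gives a perfect pairing of finite abelian groups valued in $\mathbb{Q}_p/\mathbb{Z}_p$. By functoriality of the pairing under isogenies, the self-isogeny $f$ acting on $\Sha(A)$ and its dual $\hat{f}$ acting on $\Sha(\hat{A})$ are adjoint, i.e. $\langle f x, y\rangle = \langle x, \hat{f} y\rangle$. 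Under the identification $\Sha_{nd}(\hat{A})[p^{\infty}] \cong \mathrm{Hom}(\Sha_{nd}(A)[p^{\infty}],\mathbb{Q}_p/\mathbb{Z}_p)$ furnished by the pairing, $\hat{f}$ then corresponds to the Pontryagin dual of $f$; and for an endomorphism $\phi$ of a finite abelian group one has $\#\mathrm{ker}(\phi)=\#\mathrm{coker}(\phi)=\#\mathrm{ker}(\phi^{\vee})$. This gives $\#\mathrm{ker}(f_{\Sha_{nd}[p^{\infty}]})=\#\mathrm{ker}(\hat{f}_{\Sha_{nd}[p^{\infty}]})$, and substituting into the displayed identity and cancelling completes the proof.

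I expect the main obstacle to be the second step: pinning down the adjointness of $f$ and $\hat{f}$ with respect to the Cassels--Tate pairing, and the non-degeneracy of that pairing on $\Sha_{nd}$. One must invoke the correct functoriality statement and check it applies to a self-isogeny paired against its dual, and one must be careful that the pairing becomes perfect only after quotienting by the divisible part. This last point is precisely why the divisible contribution cannot be treated directly by the pairing and has to be isolated by cancellation against the non-divisible term. The finite-group duality computation at the end is routine once the adjointness is in place.
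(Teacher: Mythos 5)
Your proposal is correct and takes essentially the same route as the paper: pass the identity of Lemma \ref{lemma17} to $p$-primary parts, use the functoriality and non-degeneracy of the Cassels--Tate pairing together with the finiteness of $\Sha_{nd}[p^{\infty}]$ to equate $\#\mathrm{ker}(\hat{f}_{\Sha_{nd}[p^{\infty}]})$ with $\#\mathrm{ker}(f_{\Sha_{nd}[p^{\infty}]})$, and cancel. The paper's proof is a terser version of exactly this argument (citing the proof of I.7.3 in Milne for the pairing facts), with your write-up merely making the adjointness and Pontryagin-duality step explicit.
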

\begin{proof}
By the functoriality and non-degeneracy of the Cassels-Tate pairing on $\Sha_{nd}$, \begin{equation}
    \#\mathrm{ker}(\hat{f}_{\Sha_{nd}[p^{\infty}]})=\#\mathrm{coker}(f_{\Sha_{nd}[p^{\infty}]})
\end{equation} 
(\cite{Milne}, proof of I.7.3).
Now $\#\mathrm{coker}(f_{\Sha_{nd}[p^{\infty}]})$ and $\#\mathrm{ker}(f_{\Sha_{nd}[p^{\infty}]})$ are equal, because $\Sha_{nd}[p^{\infty}]$ is a finite group. So the non-divisible parts of the equation in Lemma \ref{lemma17} cancel out, and we have equality of the divisible parts.
\end{proof}

\section{Complex Multiplication}
\begin{lemma}\label{lemma10}
    Suppose $A/K$ is a polarised abelian variety, $p$ a prime, and $\phi$ an invertible element of $\mathrm{End}_K(A) \otimes_{\mathbb{Z}} \mathbb{Q}_p$. Then
    \begin{equation*}
        \mathrm{ord}_p \mathrm{det} (\phi_{\mathcal{Y}_p}) = \mathrm{ord}_p \mathrm{det} (\phi^{\dag}_{\mathcal{Y}_p}).
    \end{equation*}
\end{lemma}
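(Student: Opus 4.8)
The plan is to convert the statement about $p$-adic valuations of determinants on $\mathcal{Y}_p$ into a statement about orders of kernels on $\Sha_d[p^{\infty}]$, where Lemma \ref{lemma9} already supplies the required symmetry. First I would reduce to the case where $\phi = f$ is a genuine isogeny in $\mathrm{End}_K(A)$. Since $\mathrm{End}_K(A) \otimes_{\mathbb{Z}} \mathbb{Q}$ is dense in $\mathrm{End}_K(A) \otimes_{\mathbb{Z}} \mathbb{Q}_p$, while $\mathrm{det}(\phi_{\mathcal{Y}_p})$ depends continuously on $\phi$ and $\mathrm{ord}_p$ is locally constant away from $0$, an invertible $\phi$ can be approximated by an element $g/n$ with $g \in \mathrm{End}_K(A)$ an isogeny and $n \in \mathbb{Z}_{>0}$, without changing $\mathrm{ord}_p \mathrm{det}(\phi_{\mathcal{Y}_p})$ or $\mathrm{ord}_p \mathrm{det}(\phi^{\dag}_{\mathcal{Y}_p})$; here I use that $\dag$ is continuous (as noted in the definition) and $\mathbb{Q}$-linear, so $(g/n)^{\dag} = g^{\dag}/n$. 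The scalar $n$ contributes $-\delta_p \, \mathrm{ord}_p n$ to both sides and cancels, so it suffices to treat a single isogeny $f \in \mathrm{End}_K(A)$.

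The core step is the bridge identity $\mathrm{ord}_p \mathrm{det}(f_{\mathcal{Y}_p(A/K)}) = \log_p \#\mathrm{ker}(f_{\Sha_d(A)[p^{\infty}]})$ for every isogeny $f \in \mathrm{End}_K(A)$. To prove it, recall $\Sha_d[p^{\infty}] \cong (\mathbb{Q}_p/\mathbb{Z}_p)^{\delta_p}$, so $Y_p(A/K)$ is free of rank $\delta_p$ over $\mathbb{Z}_p$ and $\mathcal{Y}_p = Y_p \otimes_{\mathbb{Z}_p} \mathbb{Q}_p$. Pontryagin duality is exact and contravariant, so from $0 \to \mathrm{ker}(f_{\Sha_d[p^{\infty}]}) \to \Sha_d[p^{\infty}] \xrightarrow{f} \Sha_d[p^{\infty}]$ I obtain $\mathrm{ker}(f_{\Sha_d[p^{\infty}]})^{\vee} \cong \mathrm{coker}(f_{Y_p})$. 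For an injective endomorphism $F$ of $\mathbb{Z}_p^{\delta_p}$ one has $\#\mathrm{coker}(F) = p^{\mathrm{ord}_p \mathrm{det}(F)}$, and $\mathrm{det}(f_{Y_p}) = \mathrm{det}(f_{\mathcal{Y}_p})$, which yields the identity.

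Next I would bring in the polarisation. As $\lambda : A \to \hat{A}$ is an isogeny, it becomes invertible after $\otimes \mathbb{Q}_p$ and induces an isomorphism $\Lambda : \mathcal{Y}_p(\hat{A}/K) \xrightarrow{\sim} \mathcal{Y}_p(A/K)$. Applying the contravariant functor $\mathcal{Y}_p$ to $f^{\dag} = \lambda^{-1} \circ \hat{f} \circ \lambda$ reverses the order of composition and turns this into a conjugation, $f^{\dag}_{\mathcal{Y}_p(A)} = \Lambda \circ \hat{f}_{\mathcal{Y}_p(\hat{A})} \circ \Lambda^{-1}$, whence $\mathrm{det}(f^{\dag}_{\mathcal{Y}_p(A)}) = \mathrm{det}(\hat{f}_{\mathcal{Y}_p(\hat{A})})$. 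Applying the bridge identity once to $A$ and once to $\hat{A}$ then reduces the desired equality to $\#\mathrm{ker}(f_{\Sha_d(A)[p^{\infty}]}) = \#\mathrm{ker}(\hat{f}_{\Sha_d(\hat{A})[p^{\infty}]})$, which is exactly Lemma \ref{lemma9}.

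I expect the main obstacle to be the bookkeeping in the bridge step and the variance of $\mathcal{Y}_p$: one must check that duality sends $\mathrm{ker}$ to $\mathrm{coker}$ with the correct transpose, that $\mathrm{det}(f_{Y_p})$ genuinely equals $\mathrm{det}(f_{\mathcal{Y}_p})$ so that no spurious power of $p$ is introduced, and that applying $\mathcal{Y}_p$ to the composite $\lambda^{-1} \circ \hat{f} \circ \lambda$ produces an honest conjugation rather than a twisted relation. Once these functoriality and duality conventions are pinned down, the remaining steps are formal, and the reduction of the first paragraph is purely a continuity argument.
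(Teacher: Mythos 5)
Your proof is correct and follows essentially the same route as the paper's: reduce to a genuine isogeny, translate $\mathrm{ord}_p\det(f_{\mathcal{Y}_p})$ into $\#\mathrm{coker}(f_{Y_p})=\#\mathrm{ker}(f_{\Sha_d[p^\infty]})$ via Pontryagin duality and Smith normal form, invoke Lemma \ref{lemma9}, and transfer to $\phi^{\dag}$ by conjugating with the polarisation. Your only addition is to spell out the density/continuity argument for the reduction step, which the paper asserts with ``the full result follows'' but does not write out.
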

\begin{proof}
    We prove this for $\phi$ an isogeny of $A$ defined over $K$, and the full result follows. By properties of Pontryagin duality,
    \begin{equation*}
        \#\mathrm{ker}(\phi_{\Sha_d[p^{\infty}]}) = \# \mathrm{coker}(\phi_{Y_p}).
    \end{equation*}
    Now $\phi_{Y_p}$ can be represented over $\mathbb{Z}_p$ by a matrix $P$ in Smith normal form, with all diagonal entries non-zero. Then
    \begin{equation*}
        \mathrm{ord}_p\mathrm{det}(\phi_{\mathcal{Y}_p}) = \mathrm{ord}_p\mathrm{det}(P) = \mathrm{ord}_p\# \mathrm{coker}(P) = \mathrm{ord}_p\# \mathrm{coker}(\phi_{Y_p}).
    \end{equation*}
    It therefore follows from Lemma \ref{lemma9} that 
    \begin{equation*}
        \mathrm{ord}_p\mathrm{det}(\phi_{\mathcal{Y}_p}) = \mathrm{ord}_p\mathrm{det}(\hat{\phi}_{\mathcal{Y}_p}).
    \end{equation*}
    Now as $\hat{\phi} = \lambda \circ \phi^{\dag} \circ \lambda^{-1}$, $\phi^{\dag}_{\mathcal{Y}_p}$ will have the same determinant, and the result follows.    
\end{proof}

Suppose from now on that $A$ has complex multiplication by a field $M$, with totally real subfield $L$. Now $\mathcal{Y}_p(A/K)$ is an $M \otimes_{\mathbb{Q}}\mathbb{Q}_p$-module. This is isomorphic to $\prod_{\mathfrak{p}|p} M_{\mathfrak{p}}$, where the product is over primes $\mathfrak{p}$ of $M$ lying above $p$, and $M_{\mathfrak{p}}$ is the completion of $M$ at $\mathfrak{p}$. We can therefore decompose $\mathcal{Y}_p(A/K)$ into a sum of $\mathbb{Q}_p$-vector spaces
\begin{equation*}
    \mathcal{Y}_p(A/K) = \bigoplus_{\mathfrak{p}|p}V_{\mathfrak{p}},
\end{equation*}
where each $V_{\mathfrak{p}}$ is an $M_{\mathfrak{p}}$-vector space.

\begin{lemma}\label{lemma11}
    For each prime $\mathfrak{p}|p$, we have 
    \begin{equation*}
        \mathrm{dim}_{\mathbb{Q}_p} V_{\mathfrak{p}} = \mathrm{dim}_{\mathbb{Q}_p} V_{\bar{\mathfrak{p}}}.
    \end{equation*}
\end{lemma}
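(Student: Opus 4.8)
The plan is to exploit Lemma \ref{lemma10}, which equates the $p$-adic valuations of $\det(\phi_{\mathcal{Y}_p})$ and $\det(\phi^{\dag}_{\mathcal{Y}_p})$, by feeding in elements $\phi$ drawn from $M$ itself. Since $M \subseteq \mathrm{End}_K(A)\otimes_{\mathbb{Z}}\mathbb{Q}$ we have $M\otimes_{\mathbb{Q}}\mathbb{Q}_p \subseteq \mathrm{End}_K(A)\otimes_{\mathbb{Z}}\mathbb{Q}_p$, and by the CM hypothesis the Rosati involution restricts to complex conjugation $c$ on $M$, extended $\mathbb{Q}_p$-linearly to $c\otimes 1$ on $M\otimes_{\mathbb{Q}}\mathbb{Q}_p \cong \prod_{\mathfrak{q}|p}M_{\mathfrak{q}}$. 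Writing $d_{\mathfrak{q}} = \dim_{M_{\mathfrak{q}}}V_{\mathfrak{q}}$, so that $\dim_{\mathbb{Q}_p}V_{\mathfrak{q}} = d_{\mathfrak{q}}\,[M_{\mathfrak{q}}:\mathbb{Q}_p]$, and noting that $c$ induces a $\mathbb{Q}_p$-isomorphism $M_{\mathfrak{q}}\cong M_{\bar{\mathfrak{q}}}$ (whence $[M_{\mathfrak{q}}:\mathbb{Q}_p]=[M_{\bar{\mathfrak{q}}}:\mathbb{Q}_p]$), it suffices to prove $d_{\mathfrak{q}}=d_{\bar{\mathfrak{q}}}$ for every $\mathfrak{q}\mid p$.

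Next I would compute both determinants explicitly. An element $\phi\in M\otimes_{\mathbb{Q}}\mathbb{Q}_p$ corresponds to a tuple $(\phi_{\mathfrak{q}})_{\mathfrak{q}}$ and preserves each summand of $\mathcal{Y}_p=\bigoplus_{\mathfrak{q}}V_{\mathfrak{q}}$, acting on $V_{\mathfrak{q}}$ as multiplication by the scalar $\phi_{\mathfrak{q}}\in M_{\mathfrak{q}}$; the determinant of scalar multiplication over $\mathbb{Q}_p$ is a norm, so
\begin{equation*}
\mathrm{ord}_p\det(\phi_{\mathcal{Y}_p}) = \sum_{\mathfrak{q}|p} d_{\mathfrak{q}}\,\mathrm{ord}_p N_{M_{\mathfrak{q}}/\mathbb{Q}_p}(\phi_{\mathfrak{q}}).
\end{equation*}
Since $\phi^{\dag} = (c\otimes 1)(\phi)$ and $c\otimes 1$ permutes the factors by $\mathfrak{q}\leftrightarrow\bar{\mathfrak{q}}$ through a norm-preserving $\mathbb{Q}_p$-isomorphism $M_{\bar{\mathfrak{q}}}\cong M_{\mathfrak{q}}$, the same computation followed by the reindexing $\mathfrak{q}\mapsto\bar{\mathfrak{q}}$ gives
\begin{equation*}
\mathrm{ord}_p\det(\phi^{\dag}_{\mathcal{Y}_p}) = \sum_{\mathfrak{q}|p} d_{\bar{\mathfrak{q}}}\,\mathrm{ord}_p N_{M_{\mathfrak{q}}/\mathbb{Q}_p}(\phi_{\mathfrak{q}}).
\end{equation*}

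Finally, Lemma \ref{lemma10} forces these two quantities to agree, so $\sum_{\mathfrak{q}}(d_{\mathfrak{q}}-d_{\bar{\mathfrak{q}}})\,\mathrm{ord}_p N_{M_{\mathfrak{q}}/\mathbb{Q}_p}(\phi_{\mathfrak{q}}) = 0$ for every invertible $\phi$. Choosing $\phi$ with a uniformizer $\pi_{\mathfrak{q}_0}$ of $M_{\mathfrak{q}_0}$ in one coordinate and units elsewhere isolates a single term, and since $\mathrm{ord}_p N_{M_{\mathfrak{q}_0}/\mathbb{Q}_p}(\pi_{\mathfrak{q}_0}) = f_{\mathfrak{q}_0}>0$ this yields $d_{\mathfrak{q}_0}=d_{\overline{\mathfrak{q}_0}}$, as required. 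I expect the main obstacle to be purely bookkeeping: pinning down precisely how $c\otimes 1$ permutes the local factors $M_{\mathfrak{q}}$ and verifying that the induced isomorphism $M_{\bar{\mathfrak{q}}}\cong M_{\mathfrak{q}}$ preserves the local norm down to $\mathbb{Q}_p$, so that the $\mathfrak{q}$-term of $\det(\phi^{\dag}_{\mathcal{Y}_p})$ genuinely matches the $\bar{\mathfrak{q}}$-term of $\det(\phi_{\mathcal{Y}_p})$; once this identification is correct, the remainder is a short linear-algebra argument. A minor point still to check is that these $\phi$ really lie in, and are invertible in, $\mathrm{End}_K(A)\otimes_{\mathbb{Z}}\mathbb{Q}_p$, so that Lemma \ref{lemma10} applies.
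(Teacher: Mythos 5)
Your proposal is correct and takes essentially the same approach as the paper: both apply Lemma \ref{lemma10} to an invertible element of $M \otimes_{\mathbb{Q}} \mathbb{Q}_p$ supported at a single prime, using that the Rosati involution acts as conjugation and hence swaps the factors $\mathfrak{q} \leftrightarrow \bar{\mathfrak{q}}$. The only difference is cosmetic: the paper places $p$ itself in the $\mathfrak{p}$-coordinate, so that $\mathrm{ord}_p\det$ directly reads off $\dim_{\mathbb{Q}_p}V_{\mathfrak{p}}$, whereas you use a uniformizer and local norms, which costs you the (easy) extra observation that conjugation gives $[M_{\mathfrak{q}}:\mathbb{Q}_p]=[M_{\bar{\mathfrak{q}}}:\mathbb{Q}_p]$.
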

\begin{proof}
    If $\mathfrak{p}=\bar{\mathfrak{p}}$, we are done, so suppose they are not equal. Then define $\alpha$ to be the element of $M \otimes_{\mathbb{Q}}\mathbb{Q}_p = \prod_{\mathfrak{p}|p} M_{\mathfrak{p}}$ which corresponds to $p$ in $M_{\mathfrak{p}}$ and 1 in all the other factors. Now we can view $\alpha$ as an element of $\mathrm{End}_K(A) \otimes_{\mathbb{Z}}\mathbb{Q}_p$. Then
    \begin{equation*}
        \mathrm{ord}_p\mathrm{det}(\alpha_{\mathcal{Y}_p(A/K)}) = \mathrm{ord}_p\mathrm{det}(p|V_{\mathfrak{p}}) = \mathrm{dim}_{\mathbb{Q}_p}V_{\mathfrak{p}}.
    \end{equation*}
    Now by the definition of complex multiplication, $\alpha^{\dag}$ acts as $\bar{\alpha}$. It therefore acts as the identity on $V_{\mathfrak{q}}$ for $\mathfrak{q} \neq \bar{\mathfrak{p}}$, and as multiplication by $p$ on $V_{\bar{\mathfrak{p}}}$. So by the same argument we have
    \begin{equation*}
        \mathrm{ord}_p\mathrm{det}(\alpha^{\dag}_{\mathcal{Y}_p(A/K)}) = \mathrm{dim}_{\mathbb{Q}_p}V_{\bar{\mathfrak{p}}},
    \end{equation*}
    and by Lemma \ref{lemma10} the result follows.
\end{proof}

\begin{proof}[Proof of Theorem 1]
     It suffices to show that $\mathrm{dim}_{\mathbb{Q}_p}\mathcal{Y}_p(A/K) = \sum_{\mathfrak{p}|p}\mathrm{dim}_{\mathbb{Q}_p}V_{\mathfrak{p}}$ is even. Let $L$ be the fixed field of complex conjugation on $M$. If $\mathfrak{p}$ is inert or ramified in $M/L$, then $[M_{\mathfrak{p}}:\mathbb{Q}_p]$ is even. Therefore $$\mathrm{dim}_{\mathbb{Q}_p}V_{\mathfrak{p}} = [M_{\mathfrak{p}}:\mathbb{Q}_p]\mathrm{dim}_{M_{\mathfrak{p}}}V_{\mathfrak{p}}$$
     is also even.

     For the primes $\mathfrak{p}$ which split in $M/L$, we have $\mathfrak{p} \neq \bar{\mathfrak{p}}$, and $$\mathrm{dim}_{\mathbb{Q}_p}V_{\mathfrak{p}} = \mathrm{dim}_{\mathbb{Q}_p}V_{\bar{\mathfrak{p}}}.$$

     Thus $\sum_{\mathfrak{p}|p}\mathrm{dim}_{\mathbb{Q}_p}V_{\mathfrak{p}}$ is even, and so is $\mathrm{rk}_p(A/K)$.
\end{proof}

\begin{remark}
    The complex multiplication assumption can be weakened. Suppose $A$ is an abelian variety with $\mathrm{End}_K(A) \otimes_{\mathbb{Z}} \mathbb{Q} \supset M$, for some field $M$, and suppose the Rosati involution induces a non-trivial automorphism on $M$. Then we can still show that $\mathrm{rk}_p(A)$ is even. Here denote this automorphism by $\phi \mapsto \bar{\phi}$, and the fixed field of it plays the role of $L$. Then the proof proceeds in the same way.
\end{remark}

\begin{remark}
    A similar argument can also be applied directly to $p^{\infty}$-Selmer groups instead of $\Sha_d$. Let $X_p(A/K) = \mathrm{Hom}(\mathrm{Sel}_{p^{\infty}}(A/K),\mathbb{Q}_p/\mathbb{Z}_p)$ and $\mathcal{X}_p(A/K) = X_p(A/K) \otimes_{\mathbb{Z}_p} \mathbb{Q}_p$. Note that the $\mathbb{Q}_p$-rank of $\mathcal{X}_p(A/K)$ is $\mathrm{rk}_p(A/K)$. Then replace Theorem \ref{l3.1}  with Theorem 7 from \cite{DD10}. This tells us that for any self-isogeny $\phi$, $Q(\phi) = Q(\hat{\phi})$, where, for an isogeny $\psi:A \rightarrow B$, $$Q(\psi) := |\mathrm{coker}(\psi: A(K)/A(K)_{\mathrm{tors}} \rightarrow B(K)/B(K)_{\mathrm{tors}})| \times |\mathrm{ker}(\psi: \Sha(A)_{\mathrm{div}} \rightarrow \Sha(B)_{\mathrm{div}})|.$$

    Section 2 of \cite{DD09} tells us that $$\mathrm{ord}_pQ(\phi) = \mathrm{ord}_p|\mathrm{coker}(\phi:X_p(A/K) \rightarrow X_p(A/K))|.$$ 
    By the same arguments as in the proof of Lemma \ref{lemma10}, we can show that for any invertible $\phi \in \mathrm{End}_K(A) \otimes_{\mathbb{Z}} \mathbb{Q}_p$, $$\mathrm{ord}_p\mathrm{det}(\phi_{\mathcal{X}_p}) = \mathrm{ord}_p|\mathrm{coker}(\phi_{X_p})| = \mathrm{ord}_p|\mathrm{coker}(\hat{\phi}_{X_p})| = \mathrm{ord}_p\mathrm{det}(\hat{\phi}_{\mathcal{X}_p}) = \mathrm{ord}_p\mathrm{det}(\phi^{\dag}_{\mathcal{X}_p}).$$
    Then by an argument similar to Lemma \ref{lemma11} and the surrounding discussion, $\mathrm{rk}_p(A/K) = \mathrm{dim}_{\mathbb{Q}_p}(\mathcal{X}_p(A/K))$ is even.
\end{remark}

\end{document}